\documentclass[11pt,letterpaper]{amsart}

\usepackage{geometry}
\usepackage[english]{babel}
\usepackage{amsmath,amssymb,mathtools,enumerate,latexsym,graphicx}
\usepackage{xcolor,etoolbox}
\usepackage{hyperref}
\hypersetup{
  colorlinks=true,
  linkcolor=black,
  citecolor=black,
  urlcolor=black,
  pdftitle={Holomorphic expanding maps},
  pdfauthor={Jiesong Zhang}
}

\newtheorem{theorem}{Theorem}[section]
\newtheorem{lemma}[theorem]{Lemma}

\theoremstyle{remark}

\newtheorem{mainthm}{\bfseries Theorem}

\newcommand{\wt}{\widetilde}

\title[Holomorphic expanding maps]
{Holomorphic expanding maps}

\author{Jiesong Zhang}
\address{Department of Mathematics\\
Kungliga Tekniska h\"ogskolan, Lindstedtsv\"agen 25\\
SE-100 44 Stockholm\\
Sweden}
\email{jiesongz@kth.se}

\begin{document}

\begin{abstract}
We prove that every holomorphic expanding map on a compact connected complex manifold is
biholomorphically conjugate to an affine expanding endomorphism of a complex
infra-nilmanifold.
\end{abstract}

\maketitle

\section{Introduction}

Let $M$ be a compact connected smooth manifold without boundary. A $C^1$ map $f\colon M\to M$ is called
\emph{expanding} if there exist a Riemannian metric on $M$ and a constant
$\lambda>1$ such that
\[
  \|Df_x(v)\|\geq \lambda\|v\|,
  \quad
  \forall x\in M, v\in T_xM.
\]
An expanding map is necessarily a covering map.

The Shub--Franks--Gromov classification states that every expanding map is topologically conjugate to an affine expanding endomorphism of an infra-nilmanifold \cite{shub_1969,franks_1970,gromov_1981}; see also \cite[Theorem~4.5]{dekimpe_2012} for the precise affine formulation.
The conjugacy is H\"older continuous but is, in general, not a $C^1$
diffeomorphism: a $C^1$ diffeomorphism conjugating the maps identifies the
eigenvalues of the derivatives of corresponding periodic return maps, whereas
these eigenvalues can be changed by arbitrarily small local $C^\infty$
perturbations. Moreover, the smooth manifold $M$ need not be diffeomorphic to its infra-nilmanifold
model \cite{farrell_jones_1978}. Additional assumptions are therefore needed to obtain a $C^1$
or smoother conjugacy; see, for example,
\cite{shub_sullivan_1985,fang_2012,spatzier_yang_2017,gogolev_rodriguez_hertz_2023}.

The dynamics of holomorphic maps on complex manifolds are often more rigid
than those of general differentiable maps, owing to the compatibility between
the dynamics and the complex structure. In this note, we confirm this
phenomenon for holomorphic expanding maps.

\begin{mainthm}\label{theorem expanding}
Let $f\colon X\to X$ be a holomorphic expanding map on a compact connected complex manifold $X$. Then there exist a complex infra-nilmanifold $N$, a holomorphic affine expanding endomorphism $\alpha\colon N\to N$, and a biholomorphism $h\colon X\to N$ such that
\[
  h\circ f=\alpha\circ h.
\]
\end{mainthm}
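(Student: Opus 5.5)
We follow the Shub--Franks--Gromov scheme, but upgrade each step to the holomorphic category by exploiting the rigidity of holomorphic maps. First, lift $f$ to the universal cover $\wt X$, obtaining a holomorphic map $\tilde f\colon\wt X\to\wt X$. Because $f$ is expanding and $X$ is compact, $\tilde f$ is a biholomorphism of $\wt X$ that uniformly expands a lifted metric. Its inverse $g=\tilde f^{-1}$ is then a holomorphic contraction of $\wt X$ with a unique fixed point $p$; this holds after replacing $f$ by a suitable choice of lift. The topological classification gives $\pi_1(X)=\Gamma$, an almost-crystallographic group modeled on a nilpotent Lie group $G$, together with an injective endomorphism $\phi$ of $\Gamma$ satisfying $\tilde f\gamma=\phi(\gamma)\tilde f$. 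We will use this classification only for this algebraic information, not for the conjugacy.

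The key step is to show that $\wt X$ is biholomorphic to $\mathbb C^n$ via a map linearizing $g$ up to a polynomial normal form. Every point of $\wt X$ is attracted to $p$ under iterates of $g$, and $\wt X=\bigcup_k \tilde f^k(U)$ for a small coordinate ball $U$ around $p$. So $\wt X$ is the basin of attraction of the attracting fixed point $p$ of the automorphism $\tilde f^{-1}$. By the Rosay--Rudin / Sternberg theory of holomorphic contractions, this basin is biholomorphic to $\mathbb C^n$, and $g$ is conjugated to a polynomial normal form $P$. The normal form $P$ is lower triangular, with only resonant monomials. We then transport the deck group: each $\gamma\in\Gamma$ becomes a biholomorphism of $\mathbb C^n$, and $\tilde f\gamma\tilde f^{-1}=\phi(\gamma)$ gives $\gamma=P^{k}\phi^k(\gamma)P^{-k}$.

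Next we show that $\Gamma$ acts by polynomial automorphisms of bounded degree. Using the contraction relation and Cauchy estimates, the high-order Taylor coefficients of $\gamma$ are damped by the spectral gap of $P$. Because $\Gamma$ is cocompact, the expansion factors are uniformly controlled. Together these force all coefficients beyond the resonance degree to vanish. Hence $\Gamma$ and $P^{-1}$ generate a group of polynomial automorphisms preserving the finite-dimensional space of "$P$-graded" polynomial maps. The group $H$ of such automorphisms commuting appropriately with the grading is a finite-dimensional complex Lie group.

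From $H$ we extract a simply connected complex nilpotent Lie group $G_{\mathbb C}$ acting simply transitively on $\mathbb C^n$ by holomorphic maps, normalized by $\Gamma$ up to a finite group of automorphisms. Here we take the Zariski/Malcev closure of $\Gamma\cap$ its unipotent part, as in Auslander's and Malcev's theorems. This identifies $\mathbb C^n\cong G_{\mathbb C}$ holomorphically, with $\Gamma\subset G_{\mathbb C}\rtimes F$ acting affinely and $\tilde f$ acting as an affine expanding map. Descending gives $h\colon X\to N=\Gamma\backslash G_{\mathbb C}$ holomorphic, with $h\circ f=\alpha\circ h$. The main obstacle is this step: showing that the holomorphic deck transformations are polynomial of bounded degree, and that the resulting group action is genuinely by left translations of a complex nilpotent Lie group structure compatible with the complex structure. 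I would first try to mimic Gromov's/Benoist's polynomial growth argument with the degree bound coming from resonances of the normal form.
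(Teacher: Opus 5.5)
\textbf{Gap 1: the bounded-degree step.} The step you flag as the main obstacle is a genuine gap, and the justification you sketch runs the wrong way. Your relation $\gamma=P^k\phi^k(\gamma)P^{-k}$ expresses $\gamma$ through the deck transformations $\phi^k(\gamma)$. These grow with $k$, so Cauchy estimates applied to them bound nothing.

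What would damp the high-order coefficients is the opposite conjugate $\gamma_k:=P^k\gamma P^{-k}=\tilde f^{-k}\gamma\tilde f^{k}$. For linear diagonal $P=\mathrm{diag}(\mu_1,\dots,\mu_n)$, the Taylor coefficients of $\gamma_k$ are $(\mu_i\mu^{-\beta})^k c_{i,\beta}(\gamma)$. Hence a uniform bound on $\gamma_k$ near $p$ forces $c_{i,\beta}(\gamma)=0$ whenever $|\mu^\beta|<|\mu_i|$.

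Such a bound does not follow from cocompactness. You only get $d(\gamma_k x,x)\le\lambda^{-k}d(\gamma y,y)$ with $y=\tilde f^k x$ running off to infinity, and a non-central element of a nilpotent deck group has unbounded displacement. The fact you need, $\tilde f^{-k}\gamma\tilde f^{k}\to\mathrm{id}$ locally uniformly, is exactly what the topological conjugacy supplies: $\tilde f^{-k}\gamma\tilde f^{k}=H^{-1}\circ L_{A^{-k}(\gamma)}\circ H$ and $A^{-k}(\gamma)\to e$. Having set the conjugacy aside, you would need an independent proof of this convergence, which is essentially the content of the Shub--Franks--Gromov theorem.

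The paper keeps the conjugacy and then needs no linearization or normal forms. The maps $\Phi_\delta$ for $\delta$ in the dense subgroup $\bigcup_n A^{-n}(\Gamma)$ are biholomorphic. Locally uniform limits then make every $\Phi_g=H^{-1}L_gH$ biholomorphic. Bochner--Montgomery shows $H$ is a diffeomorphism, along which the complex structure is transported to a left-invariant one on $G$.

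\textbf{Gap 2: the endpoint is false as stated.} Suppose a simply connected complex nilpotent Lie group $G_{\mathbb C}$ acted holomorphically and simply transitively on $\wt X\cong\mathbb C^n$, with $\Gamma\cap G_{\mathbb C}$ of finite index in $\Gamma$ acting by left translations. Then a finite cover of $X$ would be a compact quotient of a complex Lie group. This fails in a concrete example:
\begin{itemize}
\item Take the Kodaira--Thurston surface $\Gamma\backslash(H_3\times\mathbb R)$, with group law $(x,y,z,t)(x',y',z',t')=(x+x',y+y',z+z'+xy',t+t')$ and lattice $\Gamma=\mathbb Z^4$.
\item Give it the left-invariant complex structure $Je_1=e_2$, $Je_3=e_4$, where $[e_1,e_2]=e_3$.
\item The automorphism $(x,y,z,t)\mapsto(2x,2y,4z,4t)$ preserves $\Gamma$ and commutes with $J$, so it induces a holomorphic expanding map.
\item Yet two-dimensional complex nilpotent Lie groups are abelian, while no finite-index subgroup of $\Gamma$ is abelian.
\end{itemize}

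Relatedly, the complex Zariski/Malcev closure of a lattice of Hirsch length $2n$ has complex dimension $2n$, so it cannot act simply transitively on $\mathbb C^n$. The right object is the real Malcev completion $G$, acting by holomorphic maps. You would still have to show that this action is free and transitive, for instance via properness inherited from $\Gamma$, triviality of compact subgroups of $G$, and a dimension count. You would also have to show that $\tilde f$ and the finite part of $\Gamma$ normalize $G$. The complex structure obtained on $G$ is in general only left-invariant, not bi-invariant, which is why the theorem is formulated with a left-invariant complex structure.
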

By a complex infra-nilmanifold we mean an infra-nilmanifold
$\Pi\backslash G$ endowed with the complex structure induced by a
$\Pi$-invariant left-invariant integrable complex structure on $G$.

The proof combines three elementary observations. First, all deck
transformations of the universal cover $\wt X$ are biholomorphic. Second, after an affine change of coordinates, the algebraic model of $f$ is induced by an expanding automorphism $A$ of a simply connected nilpotent Lie group $G$ with $A(\Gamma)\subset\Gamma$, where $\Gamma$ is the translation lattice, and
\[
  \Gamma_\infty:=\bigcup_{n\geq0}A^{-n}(\Gamma)
\]
is dense in $G$. This produces a dense family of biholomorphic translations on
$\wt X$. Third, a locally uniform limit of holomorphic maps is holomorphic. The
closure of this family therefore recovers all left translations of $G$.
The Bochner--Montgomery theorem \cite{bochner_montgomery_1945} makes the resulting action smooth, allowing the conjugacy to transport the complex structure to a left-invariant one on $G$.

\section{Proof of Theorem \ref{theorem expanding}}
Let $X$ and $f$ satisfy the assumptions of Theorem~\ref{theorem expanding}. By the Shub--Franks--Gromov classification \cite{shub_1969,franks_1970,gromov_1981}, there exist an infra-nilmanifold $\Pi\backslash G$, an affine
expanding endomorphism $\alpha\colon\Pi\backslash G\to\Pi\backslash G$, and a homeomorphism $h\colon X\to\Pi\backslash G$ such that
\[
  h\circ f=\alpha\circ h.
\]

Choose an affine lift of $\alpha$ to $G$. It has a unique fixed point. Conjugating both the lift and $\Pi$ by a left translation sending this fixed point to the identity, and modifying $h$ accordingly, we may assume that $\alpha$ is induced by an expanding Lie-group automorphism $A\colon G\to G$ satisfying
\[
    A\Pi A^{-1}\subseteq\Pi.
\]
Let $\Gamma=\Pi\cap G$ be the translation lattice. Then
\[
    A(\Gamma)\subseteq\Gamma.
\]
Let $F\colon\wt X\to\wt X$ and $H\colon\wt X\to G$ be compatible lifts of $f$ and $h$, respectively. Then $F$ is biholomorphic and
\begin{equation}\label{equation expanding equivariance}
  H\circ F=A\circ H.
\end{equation}

\begin{lemma}\label{lemma asymptotic lattice}
The subgroup
\[
  \Gamma_\infty:=\bigcup_{n\geq0}A^{-n}(\Gamma)
\]
is dense in $G$.
\end{lemma}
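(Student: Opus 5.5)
The plan is to show that $A^{-n}(\Gamma)$ becomes arbitrarily fine as $n\to\infty$, so that every point of $G$ lies within any prescribed distance of $\Gamma_\infty$. Two preliminary observations come first. Since $A(\Gamma)\subseteq\Gamma$, we have $A^{-n}(\Gamma)\subseteq A^{-(n+1)}(\Gamma)$, so $\Gamma_\infty$ is an increasing union of subgroups and hence a subgroup. Since $\Gamma$ is a uniform lattice in $G$ (of finite index in $\Pi$, which acts cocompactly), there is a compact set $K\subset G$ with $G=\Gamma K$. Applying the automorphism $A^{-n}$ gives
\[
  G=A^{-n}(\Gamma)\,A^{-n}(K)
\]
for every $n\geq 0$.

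The key step is to show that $A^{-n}(K)$ shrinks to the identity $e$. Because $A$ is an expanding automorphism of the simply connected nilpotent group $G$, its differential $a=dA_e$ on the Lie algebra $\mathfrak g$ has all eigenvalues of modulus $>1$. Hence $a^{-n}\to 0$ exponentially in operator norm, and $\|a^{-n}\|\leq C\mu^{n}$ for some $\mu<1$. The exponential map is a diffeomorphism $\exp\colon\mathfrak g\to G$ and satisfies $A^{-n}\circ\exp=\exp\circ a^{-n}$. Write $K=\exp(L)$ with $L\subset\mathfrak g$ compact. Then $A^{-n}(K)=\exp(a^{-n}L)$, and this set converges in the Hausdorff sense to $\{e\}$. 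So for any neighbourhood $U$ of $e$ we get $A^{-n}(K)\subset U$ for all large $n$.

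Density then follows. Take $g\in G$ and a neighbourhood $U$ of $e$, and choose $n$ with $A^{-n}(K)\subset U$. Write $g=\gamma k$ with $\gamma\in A^{-n}(\Gamma)\subseteq\Gamma_\infty$ and $k\in U$. Then $\gamma=gk^{-1}\in gU^{-1}$. Since the sets $gU^{-1}$ form a neighbourhood basis of $g$, every neighbourhood of $g$ meets $\Gamma_\infty$.

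The only point needing care is the meaning of ``expanding'' for the automorphism $A$. If it is defined via an expanding derivative in some metric rather than via eigenvalues, I would note that the induced map $\alpha$ being expanding forces $\|dA_e v\|\geq\lambda\|v\|$ for a left-invariant metric. This gives $\|a^{-1}\|\leq\lambda^{-1}$ directly, and the shrinking argument above goes through unchanged.
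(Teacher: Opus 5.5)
Your proof is correct and is essentially the paper's argument: decomposing $g$ according to $G=A^{-n}(\Gamma)\,A^{-n}(K)$ is the paper's step of writing $A^n(g)=\gamma_nk_n$ and applying $A^{-n}$, and your exponential-map argument makes explicit the uniform shrinking of $A^{-n}(K)$ to $e$ that the paper summarizes as ``$A^{-1}$ is contracting.'' In your closing remark, expansion of $\alpha$ directly gives only $\|a^{n}v\|\geq c\lambda^{n}\|v\|$ for some constant $c>0$ (for an arbitrary left-invariant metric), but this still yields $\|a^{-n}\|\to0$, which is all your argument uses.
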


\begin{proof}
Choose a compact set $K\subset G$ such that $G=\Gamma K$. For any $g\in G$
and $n\geq1$, write
\[
  A^n(g)=\gamma_nk_n,
  \qquad \gamma_n\in\Gamma,\quad k_n\in K.
\]
It follows that
\[
  A^{-n}(\gamma_n)=gA^{-n}(k_n^{-1}).
\]
Since $A^{-1}$ is contracting, $A^{-n}(k_n^{-1})\to e$ and $A^{-n}(\gamma_n)\to g$.
\end{proof}

For $g\in G$, define a homeomorphism $\Phi_g\colon\wt X\to\wt X$ by
\[
  \Phi_g:=H^{-1}\circ L_g\circ H,
\]
where $L_g$ denotes left translation by $g$ on $G$.

\begin{lemma}\label{lemma full translations expanding}
The map $\Phi_g$ is biholomorphic for every $g\in G$.
\end{lemma}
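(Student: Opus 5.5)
We first treat $g\in\Gamma_\infty$, then pass to arbitrary $g\in G$ by density (Lemma~\ref{lemma asymptotic lattice}) and a normal-families argument.

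\textbf{Step 1: lattice translations.} For $\gamma\in\Gamma$, the map $L_\gamma$ is a deck transformation of $G\to\Pi\backslash G$. Since $H$ is a lift of the homeomorphism $h$, the map $\Phi_\gamma=H^{-1}L_\gamma H$ is the corresponding deck transformation of $\wt X\to X$. Deck transformations are local lifts of the identity of $X$ through holomorphic local charts of the covering, so $\Phi_\gamma$ is biholomorphic.

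\textbf{Step 2: the group $\Gamma_\infty$.} For $g\in A^{-n}(\Gamma)$, write $g=A^{-n}(\gamma)$ with $\gamma\in\Gamma$. Since $A$ is an automorphism, $L_g=A^{-n}L_\gamma A^{n}$. Using \eqref{equation expanding equivariance} in the form $H^{-1}A^{\pm n}=F^{\pm n}H^{-1}$ (as $F$ and $A$ are bijective), we get
\[
\Phi_g=H^{-1}A^{-n}L_\gamma A^nH=F^{-n}\,\Phi_\gamma\,F^n .
\]
This is a composition of biholomorphisms, since $F$ is a biholomorphic lift of the holomorphic covering $f$ to universal covers. Hence $\Phi_g$ is biholomorphic for all $g\in\Gamma_\infty$.

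\textbf{Step 3: passage to the limit.} Fix $g\in G$ and choose $g_j\in\Gamma_\infty$ with $g_j\to g$. Since $H$ is a homeomorphism and left multiplication is jointly continuous, we have $\Phi_{g_j}\to\Phi_g$ locally uniformly on $\wt X$. To make this precise, fix a compact $C\subset\wt X$. Then $L_{g_j}H(C)$ stays in a fixed compact subset of $G$ and converges uniformly to $L_gH(C)$, and $H^{-1}$ is uniformly continuous on compacts. Now take a point $p$ and a holomorphic chart $V$ around $q=\Phi_g(p)$, and a neighbourhood $U$ of $p$ whose image under $\Phi_{g_j}$ lies in $V$ for large $j$. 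In these coordinates the $\Phi_{g_j}$ are holomorphic maps into $\mathbb C^m$ converging uniformly on $U$, so the limit $\Phi_g$ is holomorphic by Weierstrass. The same argument applied to $\Phi_{g_j}^{-1}=\Phi_{g_j^{-1}}\to\Phi_{g^{-1}}=\Phi_g^{-1}$ shows the inverse is holomorphic. Hence $\Phi_g$ is biholomorphic.

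\textbf{Main obstacle.} The delicate point is Step 3. We must ensure that locally uniform convergence of the homeomorphisms is genuinely available on the noncompact space $\wt X$, and that it suffices to conclude holomorphy of the inverse. Both reduce to continuity of $H$, of $H^{-1}$ and of the group multiplication, applied on compact sets. Once convergence is localized into charts, the rest is the classical fact that uniform limits of holomorphic functions are holomorphic.
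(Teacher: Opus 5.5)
Your proposal is correct and follows the paper's proof step for step. You use deck transformations for $\Gamma$, the conjugation identity $\Phi_{A^{-n}(\gamma)}=F^{-n}\Phi_\gamma F^n$ for $\Gamma_\infty$, and a locally uniform limit along $\delta_j\to g$ (also applied to the inverses) via the density from Lemma~\ref{lemma asymptotic lattice}. Your Step 3 only spells out the chart localization the paper leaves implicit, and it needs just the Weierstrass convergence theorem rather than a normal-families argument.
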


\begin{proof}
We first consider the map $\Phi_\delta$ with $\delta=A^{-n}(\gamma)\in\Gamma_\infty$. For
$\gamma\in\Gamma$, the map
$\Phi_\gamma=H^{-1}\circ L_\gamma\circ H$ is a deck transformation and hence
is biholomorphic. Moreover,
\[
  L_{\delta}=L_{A^{-n}(\gamma)}=A^{-n}\circ L_\gamma\circ A^n.
\]
By \eqref{equation expanding equivariance},
\[
  \Phi_\delta=H^{-1}\circ L_\delta\circ H=F^{-n}\circ\Phi_\gamma\circ F^n,
\]
so $\Phi_\delta$ is biholomorphic.

Now fix $g\in G$. By Lemma~\ref{lemma asymptotic lattice}, choose $\delta_j\in\Gamma_\infty$ with $\delta_j\to g$. Then $L_{\delta_j}\to L_g$ locally uniformly on $G$, and hence
$\Phi_{\delta_j}\to\Phi_g$ locally uniformly on $\wt X$. Thus $\Phi_g$ is
holomorphic. Likewise,
$\Phi_{\delta_j}^{-1}=\Phi_{\delta_j^{-1}}$ converges locally uniformly to
$\Phi_{g^{-1}}=\Phi_g^{-1}$, and $\Phi_g$ is biholomorphic.
\end{proof}

The maps $\{\Phi_g:g\in G\}$ define a continuous, free, and transitive action
of $G$ on $\wt X$, and every element acts biholomorphically. We then use the following consequence of the Bochner--Montgomery theorem \cite{bochner_montgomery_1945}.

\begin{theorem}[Bochner--Montgomery]\label{theorem BM}
Let $L$ be a finite-dimensional Lie group acting continuously and effectively
on a smooth manifold $M$ by $C^\infty$ diffeomorphisms. Then the action map $L\times M\to M$ is smooth.
\end{theorem}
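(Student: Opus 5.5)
The plan is to reduce smoothness of the action map $(l,x)\mapsto l\cdot x$ to smoothness of finitely many scalar functions of $(l,x)$, built by averaging over $L$. Fix a left Haar measure $dg$ on $L$. For $u\in C^\infty(M)$ and $\varphi\in C_c^\infty(L)$ put
\[
  u_\varphi(x):=\int_L \varphi(g)\,u(g\cdot x)\,dg .
\]
The change of variables $g\mapsto gh^{-1}$ gives
\[
  u_\varphi(h\cdot x)=\int_L \varphi(g)\,u(gh\cdot x)\,dg=\int_L \varphi(gh^{-1})\,\Delta(h)^{\pm1}\,u(g\cdot x)\,dg ,
\]
where $\Delta$ is the modular function. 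In the last expression the variable $h$ appears only in the smooth kernel $\varphi(gh^{-1})$ and the smooth factor $\Delta(h)^{\pm1}$. So $(h,x)\mapsto u_\varphi(h\cdot x)$ is smooth provided we may differentiate under the integral in $x$. That requires every $x$-derivative of $(g,x)\mapsto u(g\cdot x)$ to be continuous in $(g,x)$, or at least locally bounded in $g$, on compact sets.

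\textbf{Step 1 (uniform bounds).} Work in a coordinate chart around a point $x_0$, and fix a compact neighbourhood $C$ of $x_0$ and an order $k$. Consider the sets $E_N$ of those $g\in L$ for which the $C^k$-norm of $x\mapsto g\cdot x$ on $C$, measured in charts near $g\cdot x_0$, is at most $N$. Each $E_N$ is closed, because the action is continuous and $C^k$ bounds pass to uniform limits. Their union is $L$, because each $g$ acts by a $C^\infty$ diffeomorphism. By Baire, some $E_N$ has interior. Left-translating by group elements and using the chain rule, together with compactness, then gives local uniform $C^k$ bounds for $g$ in every compact subset of $L$.

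To upgrade boundedness to continuity of the derivatives in $g$, I would use Arzel\`a--Ascoli: the $(k+1)$-bounds give equicontinuity of the $k$-th derivatives. Any two limits of $D^k(g_j\cdot{})$ must agree, since $g_j\cdot{}\to g\cdot{}$ uniformly and the derivatives of the limit are determined by it. I expect this step to be the main obstacle; the rest is formal.

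\textbf{Step 2 (smooth averages).} By Step 1 and differentiation under the integral, every $u_\varphi$ is $C^\infty$ on $M$, and $(h,x)\mapsto u_\varphi(h\cdot x)$ is $C^\infty$ on $L\times M$.

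\textbf{Step 3 (charts).} Let $u_1,\dots,u_m$ be coordinate functions near a point $y_0$, cut off to lie in $C^\infty(M)$. Take $\varphi_j\geq0$ with $\int\varphi_j=1$ and support shrinking to $e$. By Step 1 the maps $g\cdot{}$ converge to the identity in $C^1$ on compacta as $g\to e$, so $(u_i)_{\varphi_j}\to u_i$ in $C^1$ near $y_0$. Hence for large $j$ the functions $(u_1)_{\varphi_j},\dots,(u_m)_{\varphi_j}$ form a smooth chart near $y_0$.

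\textbf{Step 4 (conclusion).} Given $(l_0,x_0)$, take $y_0=l_0\cdot x_0$ and the chart from Step 3. By continuity, for $(l,x)$ near $(l_0,x_0)$ the point $l\cdot x$ stays inside this chart. The components of the action map in this chart are the functions $(u_i)_{\varphi_j}(l\cdot x)$, which are smooth by Step 2. Therefore the action map $L\times M\to M$ is smooth.

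Effectiveness is not needed for this argument. It matters only for identifying $L$ with a group of diffeomorphisms, which is the form in which the theorem will be applied.
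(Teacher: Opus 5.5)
The paper does not prove this statement. It quotes it from Bochner--Montgomery and uses it as a black box, so your proposal is a self-contained proof where the paper has none. It is correct, and it follows classical lines. Baire category gives locally uniform bounds on all $x$-derivatives of $g\cdot x$, and Arzel\`a--Ascoli upgrades these to joint continuity in $(g,x)$. Haar averaging then produces functions $u_\varphi$ that serve as smooth coordinates. In these coordinates the action becomes $(h,x)\mapsto\Delta(h)^{\pm1}\int_L\varphi(gh^{-1})\,u(g\cdot x)\,dg$, which is visibly smooth. You are also right that effectiveness plays no role once $L$ is given as a Lie group.

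Two routine details in Step~1 should be tightened.
\begin{enumerate}
\item The phrase ``the $C^k$-norm measured in charts near $g\cdot x_0$'' is not well defined, since $g\cdot C$ need not lie in a single chart. A chart that varies with $g$ also makes the closedness of $E_N$ unclear. Instead, measure the $C^k$-norm of $\iota\circ(g\cdot{})$ on $C$ for a fixed closed embedding $\iota\colon M\hookrightarrow\mathbb{R}^K$. Alternatively, run the Baire argument on a compact neighbourhood $Q$ of $e$ with $Q\cdot C$ inside one chart, and translate afterwards.
\item Take $C$ to be a closed coordinate ball rather than an arbitrary compact neighbourhood. Then a bound on $D^kf_j$ makes $D^{k-1}f_j$ uniformly Lipschitz on $C$, and the $C^k$ bound genuinely passes to the uniform limit.
\end{enumerate}
With these fixes, Steps~2--4 go through as you describe.

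As for what each route buys: citing keeps the note short and covers the general statement, while your argument makes the paper independent of that reference. Your route is also notably cheap in the paper's actual application. There the maps $\Phi_g$ are holomorphic and depend continuously on $g$ in the topology of locally uniform convergence on $\wt X$. So Cauchy's estimates and Weierstrass's theorem give your Step~1 (the part you flagged as the main obstacle) immediately, with no Baire argument. Your Steps~2--4 then prove the smoothness of the $G$-action on $\wt X$ directly.
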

By Theorem~\ref{theorem BM}, the action given by $\{\Phi_g:g\in G\}$ is smooth. Choose $x_0\in\wt X$ with
$H(x_0)=e$ and consider the orbit map
\[
  \Theta\colon G\to\wt X,
  \qquad
  g\longmapsto\Phi_g(x_0)=H^{-1}(g).
\]
The kernel of $D_e\Theta$ is the Lie algebra of the stabilizer of $x_0$, which is zero because the action is free. Since $\dim_{\mathbb R}G=\dim_{\mathbb R}\widetilde X$, the orbit map is a local diffeomorphism. It is bijective by freeness and transitivity, and hence is a global $C^\infty$ diffeomorphism. Consequently, $H=\Theta^{-1}$ is a $C^\infty$ diffeomorphism.

Push the complex structure of $\wt X$ forward to $G$ by $H$ and denote it by
$J_G$. This complex structure is smooth and integrable. It follows from the definition that 
\[
  H\colon\wt X\longrightarrow(G,J_G)
\]
is a biholomorphism. Since every $\Phi_g$ is biholomorphic, every left translation $L_g$ is $J_G$-holomorphic. Hence $J_G$ is left-invariant. Moreover, $A=H F H^{-1}$ is $J_G$-holomorphic. 

Finally, since $H$ lifts $h$, it conjugates the full deck group of $\wt X\to X$ to $\Pi$. Every deck transformation of $\wt X$ is
biholomorphic, so the affine action of $\Pi$ on $(G,J_G)$ is biholomorphic. Hence $J_G$ descends to $N=\Pi\backslash G$, and $H$ descends to a biholomorphism $h\colon X\to N$. The map $A$ induces the holomorphic affine expanding endomorphism $\alpha$, and $h\circ f=\alpha\circ h$. This proves Theorem~A.

\bibliographystyle{amsalpha}
\bibliography{ref}
\end{document}